\tikzstyle{vertex}=[circle, draw, inner sep=0pt, minimum size=6pt]
\newcommand{\vertex}{\node[vertex]}
\tikzstyle arrowstyle=[scale=1.5]%此命令是控制箭头的大小。
\tikzstyle directed=[postaction={decorate,decoration={markings, mark=at position .75 with {\arrow[arrowstyle]{stealth}}}}] %此命令是控制正向箭头的位置。
\tikzstyle reverse directed=[postaction={decorate,decoration={markings, mark=at position .75 with {\arrowreversed[arrowstyle]{stealth};}}}]%此命令是控制反向箭头的位置。
\newtheorem{Theorem}{Theorem}[section]
\newtheorem{Proposition}{Proposition}[section]
\newenvironment{proof}{\noindent {\bf Proof.}\rm}
{\mbox{}\hfill\rule{0.5em}{0.809em}\par}
\algnewcommand\INPUT{\item[\textbf{Input:}]}%
\algnewcommand\OUTPUT{\item[\textbf{Output:}]}%
\begin{document}
\title{ \huge Antidirected hamiltonian paths in $k$-hypertournaments\thanks{This work was supported by  NSFC (No. 12261016)}}
\author{
Hong Yang\thanks{College of Mathematics and System Sciences, Xinjiang University, Urumqi,
China. Email: honger9506@126.com},
Changchang Dong\thanks{Department of Mathematical Sciences, Tsinghua University, Beijing 100084, China. Email: jintiandcc@126.com},
Jixiang Meng\thanks{College of Mathematics and System Sciences, Xinjiang University, Urumqi,
China. Email: mjx@xju.edu.cn},
Juan Liu\thanks{College of Big Data Statistics, Guizhou University of Finance and Economics, Guiyang, 550025, China. Email: liujuan1999@126.com(Corresponding author)}
 }
\date{}

\maketitle

\begin{abstract}
A $k$-hypertournament $H$ on $n$ vertices is a pair $(V(H),A(H))$, where $V(H)$ is a set of vertices and $A(H)$ is a set of $k$-tuples of vertices, called arcs, such that for any $k$-subset $S$ of $V(H)$, $A(H)$ contains exactly one of the $k!$ $k$-tuples whose
 entries belong to $S$. Clearly, a 2-hypertournament is a tournament.
 An antidirected path in $H$ is a sequence $x_1 a_1 x_2 a_2 x_3 \ldots x_{t-1} a_{t-1} x_t$ of distinct vertices $x_1, x_2, \ldots, x_t$ and distinct arcs $a_1, a_{2},\ldots, a_{t-1}$ such that
  for any $i\in \{2,3,\ldots, t-1\}$, either $x_{i-1}$ precedes $x_{i}$ in $a_{i-1}$ and $x_{i+1}$ precedes $x_{i}$ in $a_{i}$, or $x_{i}$ precedes $x_{i-1}$ in $a_{i-1}$ and $x_{i}$ precedes $x_{i+1}$ in $a_{i}$.
An antidirected path that includes all vertices of $H$ is known as an antidirected hamiltonian path.
In  this paper, we prove that except for four hypertournaments, $T_3^{c}, T_5^{c}, T_7^{c}$ and $H_{4}$,
 every $k$-hypertournament with $n$ vetices, where $2\leq k\leq n-1$, has an antidirected hamiltonian path, which extends Gr\"{u}nbaum's theorem on tournaments (except for three tournaments, $T_3^{c}, T_5^{c}$ and $T_7^{c}$,
 every tournament has an antidirected hamiltonian path).
\end{abstract}

\noindent {\small {\bf Key words}. $k$-hypertournament, tournament, antidirected path, antidirected hamiltonian path}

\section{Introduction}
~~~ Let $D$ be a digraph  with vertex set $V(D)$ and arc set $A(D)$. In this paper, the notation $(x,y)$ is used to represent an arc in the digraph $D$ that is oriented from vertex $x$ to vertex $y$. Additionally, the notation $x\rightarrow y$ may also be used to denote the arc.
A digraph is \textbf{tournament} if it has no pair of nonadjacent vertices and cycles of length 2. We denote the tournament obtained from a tournament $T$, by reversing the direction
of every arc in it by $\overleftarrow{T}_{n}$.
 Undefined terms and notation follow \cite{BaGu09}.
 \par An \textbf{antidirected path} of digraph $D$ is a path that alternates between forward and backward arcs.
An \textbf{antidirected hamiltonian path} of $D$ is an antidirected path that contains every vertex in $H$.
 An antidirected hamiltonian path in $D$ of the form $x \rightarrow x_1 \leftarrow \cdots y$ (resp. $x \leftarrow x_1 \rightarrow \cdots y$) is said to have $x$ as a \textbf{starting vertex} (resp. \textbf{terminal vertex}). If $x$ is both the starting and terminating vertex, then it is known as a \textbf{double point}.
 The study of antidirected hamiltonian paths in tournaments was initiated by Gr\"{u}nbaum \cite{Grnbaum1971}, Gr\"{u}nbaum introduced antidirected hamiltonian paths and proved that every tournament has an antidirected hamiltonian path except for the three tournaments $T_3^{c}, T_5^{c}$, and $T_7^{c}$ which are depicted as in Figure 1.
In 1972, Rosenfeld \cite{Rosenfeld1972} proved that if tournament $T_n$ has an antidirected hamiltonian path and $n$ is odd, then $T_n$ has a double point.
  Additional researches on antidirected hamiltonian path in digraphs can be found in \cite{Chen2024,Hell2002,Klimo2022,Sahili2018,Thomassen1973}, among others.
  {\tikzstyle directed=[postaction={decorate,decoration={markings, mark=at position .52 with {\arrow[arrowstyle]{stealth}}}}]
\tikzstyle reverse directed=[postaction={decorate,decoration={markings, mark=at position .52 with {\arrowreversed[arrowstyle]{stealth};}}}]
\[\begin{tikzpicture}
[x=0.85cm, y=0.65cm]
%%%%%%%%%%%%%%%%%%%%%%%%%%%%%%%%%%%%%%%%%%%%%%%%%%%%%%%%%%%%%%%%%%%%%%%%%%%%%%%%%
\vertex  (x1) at (-7,2)[fill=black] [label=above:$z_{1}$] {};
\vertex  (x3) at (-8.6,-2)[fill=black] [label=left:$z_{2}$] {};
\vertex  (x2) at (-5.4,-2)[fill=black] [label=right:$z_{3}$] {};
%左图布点
%%%%%%%%%%%%%%%%%%%%%%%%%%%%%%%%%%%%%%%%%%%%%%%%%%%%%%%%%%%%%%%%%%%%%%%%%%%%%%%%%%%%%%%%%
\draw[thick,directed](x1)--(x2);
\draw[thick,directed](x2)--(x3);
\draw[thick,directed](x3)--(x1);
%左图弧线
%%%%%%%%%%%%%%%%%%%%%%%%%%%%%%%%%%%%%%%%%%%%%%%%%%%%%%%%%%%%%%%%%%%%%%%%%%%%%%%%%%%%%%%%%
%The above codes for left digrah
%%%%%%%%%%%%%%%%%%%%%%%%%%%%%%%%%%%%%%%%%%%%%%%%%%%%%%%%%%%%%%%%%%%%%%%%%%%%%%%%%%%%%%%%%
\vertex  (x1) at (-1,3)[fill=black] [label=above:$z_{1}$] {};
\vertex  (x2) at (1,1.1)[fill=black] [label=right:$z_{2}$] {};
\vertex  (x3) at (0.3,-2)[fill=black] [label=right:$z_{3}$] {};
\vertex  (x4) at (-2.3,-2)[fill=black] [label=left:$z_{4}$] {};
\vertex  (x5) at (-3,1.1)[fill=black] [label=left:$z_{5}$] {};
%中图布点
%%%%%%%%%%%%%%%%%%%%%%%%%%%%%%%%%%%%%%%%%%%%%%%%%%%%%%%%%%%%%%%%%%%%%%%%%%%%%%%%%%%%%%%%%
\draw[thick,directed](x1)--(x2);\draw[thick,directed](x2)--(x3);
\draw[thick,directed](x3)--(x4);\draw[thick,directed](x4)--(x5);
\draw[thick,directed](x5)--(x1);
\draw[directed](x1)--(x3);\draw[directed](x3)--(x5);
\draw[directed](x5)--(x2);\draw[directed](x2)--(x4);
\draw[directed](x4)--(x1);
%中间图弧线
%%%%%%%%%%%%%%%%%%%%%%%%%%%%%%%%%%%%%%%%%%%%%%%%%%%%%%%%%%%%%%%%%%%%%%%%%%%%%%%%%%%%%%%%%
%The above codes for middle digrah
%%%%%%%%%%%%%%%%%%%%%%%%%%%%%%%%%%%%%%%%%%%%%%%%%%%%%%%%%%%%%%%%%%%%%%%%%%%%%%%%%%%%%%%%%
\vertex  (x1) at (6,3.3)[fill=black] [label=above:$z_{1}$] {};
\vertex  (x2) at (7.8,2.2)[fill=black] [label=right:$z_{2}$] {};
\vertex  (x3) at (8.3,-0.2)[fill=black] [label=right:$z_{3}$] {};
\vertex  (x4) at (7.1,-2.1)[fill=black] [label=right:$z_{4}$] {};
\vertex  (x5) at (4.9,-2.1)[fill=black] [label=left:$z_{5}$] {};
\vertex  (x6) at (3.7,-0.2)[fill=black] [label=left:$z_{6}$] {};
\vertex  (x7) at (4.2,2.2)[fill=black] [label=left:$z_{7}$] {};
%中图布点
%%%%%%%%%%%%%%%%%%%%%%%%%%%%%%%%%%%%%%%%%%%%%%%%%%%%%%%%%%%%%%%%%%%%%%%%%%%%%%%%%%%%%%%%%
\draw[thick,directed](x1)--(x2);\draw[thick,directed](x2)--(x3);
\draw[thick,directed](x3)--(x4);\draw[thick,directed](x4)--(x5);
\draw[thick,directed](x5)--(x6);\draw[thick,directed](x6)--(x7);
\draw[thick,directed](x7)--(x1);
\draw[directed](x1)--(x3);\draw[directed](x3)--(x5);
\draw[directed](x5)--(x7);\draw[directed](x7)--(x2);
\draw[directed](x2)--(x4);\draw[directed](x4)--(x6);
\draw[directed](x6)--(x1);
\draw[directed](x1)--(x5);\draw[directed](x5)--(x2);
\draw[directed](x2)--(x6);\draw[directed](x6)--(x3);
\draw[directed](x3)--(x7);\draw[directed](x7)--(x4);
\draw[directed](x4)--(x1);
%The above codes for right digrah
%%%%%%%%%%%%%%%%%%%%%%%%%%%%%%%%%%%%%%%%%%%%%%%%%%%%%%%%%%%%%%%%%%%%%%%%%%%%%%%%%%%%%%%%%
\node at (-7,-2.8){$T^{c}_{3}$};\node at (-1,-2.8){$T^{c}_{5}$};\node at (6,-2.8){$T^{c}_{7}$};
\node at (-1,-3.8){Figure 1. Tournaments $T^{c}_{3}$, $T^{c}_{5}$ and $T^{c}_{7}$.};
\end{tikzpicture}\]
\begin{Theorem}\label{ADHT}(\cite{Grnbaum1971})(Gr\"{u}nbaum's theorem)
Except for three tournaments, $T_3^{c}, T_5^{c}$ and $T_7^{c}$, every tournament has an antidirected hamiltonian path.
\end{Theorem}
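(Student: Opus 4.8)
The plan is to prove Theorem~\ref{ADHT} by induction on the number of vertices $n$, reducing a tournament on $n$ vertices to a subtournament on $n-2$ vertices. The reason for removing two vertices rather than one is that the ``shape'' of an antidirected hamiltonian path depends on the parity of $n$: when $n$ is even one endpoint is a \emph{source} (its unique incident arc leaves it) and the other is a \emph{sink} (its unique incident arc enters it), whereas when $n$ is odd both endpoints have the same type. Deleting two vertices preserves this parity, so the argument splits into two parallel statements — one for even $n$, one for odd $n$ — each reducing to the case $n-2$ of the same parity. For the induction to close I would strengthen the statement so as to keep control of the endpoints: for even $n$ (with no exceptions) one should produce, for essentially every choice of a vertex $w$, an antidirected hamiltonian path having $w$ as its source endpoint, and similarly one with $w$ as its sink endpoint; for odd $n$ (exceptions $T_3^c,T_5^c,T_7^c$) the clean strengthening is that the path can be chosen to have a double point, which simultaneously yields source-ended and sink-ended paths to feed into later steps. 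This is exactly the phenomenon recorded by Rosenfeld~\cite{Rosenfeld1972}.

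In the inductive step, take $T_n$ with $n$ beyond the base cases and choose two vertices $u,v$ to delete. By the (strengthened) induction hypothesis, $T_n-\{u,v\}$ has an antidirected hamiltonian path $P$ with a source endpoint $a$ and a sink endpoint $b$, and we may prescribe $a$ and $b$ within wide limits. The elementary move is to re-attach a deleted vertex at an end of $P$: a vertex $w$ may be prepended immediately before the source endpoint $a$ precisely when $a\rightarrow w$ (after which $w$ becomes a new sink endpoint), and appended immediately after the sink endpoint $b$ precisely when $w\rightarrow b$ (after which $w$ becomes a new source endpoint). Hence it suffices to arrange that one of $u,v$ is dominated by $a$ and the other dominates $b$; attaching them at the two ends then produces the desired antidirected hamiltonian path $u\leftarrow a\rightarrow\cdots\rightarrow b\leftarrow v$ of $T_n$, with the analogous bookkeeping when $n$ is odd and both endpoints have the same type. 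The freedom to choose the deleted pair $\{u,v\}$ using degree information in $T_n$, together with the freedom to choose the endpoints $a,b$ afforded by the strengthened hypothesis, is what lets us meet these domination conditions; in the remaining tight cases one repairs a type or parity mismatch by inserting a vertex next to an endpoint of $P$ or by a local swap near an end.

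The base of the induction consists of the tournaments on at most nine vertices. Up to isomorphism there are only finitely many of these, and one verifies by a direct case analysis that each of them has an antidirected hamiltonian path — with the prescribed endpoint or double-point properties — except for $T_3^c$, $T_5^c$ and $T_7^c$. The bound nine is needed because the odd induction reduces $n$ to $n-2$, so the exceptional value $7$ must be crossed by hand; once $n$ is odd and $n\ge 11$, the subtournament $T_n-\{u,v\}$ has $n-2\ge 9$ vertices and is covered by the hypothesis, and no even case ever reduces to an exceptional one since $T_2$ is not exceptional.

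The main obstacle is exactly the bookkeeping in the extension step: one must guarantee that the antidirected hamiltonian path delivered by induction for $T_n-\{u,v\}$ has endpoints compatible with re-attaching $u$ and $v$, and the naive attempt can fail for a particular deleted pair — for instance if $a$ dominates neither of $u,v$, if both of them dominate $b$, or if the pair is chosen so that $T_n-\{u,v\}$ is one of the three sporadic exceptions. Ruling these situations out — by a careful, degree-driven choice of $\{u,v\}$ in $T_n$ and, when forced, a local modification of $P$ near an end — is where essentially all the effort goes, and the surviving obstructions at small order are precisely $T_3^c,T_5^c,T_7^c$, whose lack of an antidirected hamiltonian path is made possible by the odd-order constraint that both endpoints be of the same type. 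For very large $n$ this step can alternatively be handled by a direct greedy construction exploiting the fact that a large tournament contains many vertices whose in- and out-degrees are nearly balanced, leaving ample room to build the alternating path vertex by vertex.
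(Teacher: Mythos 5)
First, a point of order: the paper does not prove Theorem~\ref{ADHT} at all. It is Gr\"{u}nbaum's theorem, quoted from \cite{Grnbaum1971} and used as a black box (it supplies the base case $k=2$ of Theorem~\ref{ADH}). So there is no in-paper proof to compare your proposal against; it has to stand on its own as a proof of the 1971 result, and as written it does not.

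Two gaps are fatal. First, your base case is ``all tournaments on at most nine vertices, verified by direct case analysis.'' Up to isomorphism there are already 456 tournaments on 7 vertices, 6880 on 8, and 191536 on 9; this is not a case analysis one can carry out by hand, and you offer no organizing principle that would collapse it. Second, and more importantly, the inductive step is not actually performed. The entire difficulty of Gr\"{u}nbaum's theorem is concentrated exactly where you defer it: after deleting $\{u,v\}$, one must produce an antidirected hamiltonian path of $T_n-\{u,v\}$ whose source endpoint $a$ dominates one of $u,v$ while its sink endpoint $b$ is dominated by the other, and the proposal handles this with ``degree-driven choice'' and ``a local swap near an end.'' Your strengthened induction hypothesis --- that for ``essentially every'' vertex $w$ there is an antidirected hamiltonian path with $w$ as source (resp.\ sink) endpoint --- is not a precise statement, so the induction has no well-defined hypothesis to invoke; worse, prescribing an endpoint of an antidirected hamiltonian path is itself a nontrivial theorem with its own exceptional configurations (this is precisely the subject of \cite{Rosenfeld1972} and \cite{Hell2002}, results that historically came \emph{after} Gr\"{u}nbaum's theorem and partly rely on it), so assuming it ``with no exceptions'' for even $n$ is close to assuming what needs to be proved. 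Your parity bookkeeping is correct and is the right frame --- for even $n$ the two ends of an antidirected hamiltonian path have opposite types, for odd $n$ the same type, which is why the exceptions can only be odd --- but the proposal currently consists of that frame plus an acknowledgment that the hard cases remain. It is a plan, not a proof.
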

%\begin{Theorem}\label{ADHO} (\cite{Rosenfeld1972})
%If tournament $T_n$ has an antidirected hamiltonian path and $n$ is odd, then $T_n$ has a double point.
%\end{Theorem}
\par A \textbf{$k$-hypertournament} $H$ on $n$ vertices is a pair $(V(H),A(H))$, where $V(H)$ is a set of vertices and $A(H)$ is a set of $k$-tuples of vertices, called \textbf{arcs}, such that for any $k$-subset $S$ of $V(H)$, $A(H)$ contains exactly one of the $k!$ $k$-tuples whose
 entries belong to $S$.  Clearly, a 2-hypertournament is
merely a tournament. We use $(x_{1},x_{2},\ldots,x_{k})$ to denote the orientation of arc $a$ in $H$,
 where $\{x_{1},x_{2},\ldots,x_{k}\}\subseteq V(H)$, for any two distinct integers $i$ and $j$ with $1\leq i<j\leq k$, we say $x_{i}$ precedes $x_{j}$ in $a$.
\par Given a $k$-hypertournament $H=(V(H), A(H))$, for two distinct vertices $x, y \in V(H)$, the set of all arcs in which $x$ precedes $y$ is denoted by $A_H(x, y)$.
 Clearly,
$$
|A_H(x, y)|+|A_H(y, x)|=\Big(\begin{array}{l}
n-2 \\
k-2
\end{array}\Big).
$$
%For a vertex $x \in V(H)$, the \textbf{out-degree} of $x$, denoted by $d_{H}^{+}(x)$, and the \textbf{in-degree} of $x$, denoted by $d_{H}^{-}(x)$, are defined as $d_{H}^{+}(x)=\sum_{y \in V(H) \backslash\{x\}}|A_H(x, y)|$ and $d_{H}^{-}(x)=\sum_{y \in V(H) \backslash\{x\}}|A_H$$(y,x)|$.
\par Let $H$ be a $k$-hypertournament. For an integer $t$ with at least 2, an \textbf{antidirected path} $P$ with vertex set $V(P)$  and arc set $A(P)$ in $H$
is a sequence $x_1 a_1 x_2 a_2 x_3 \ldots x_{t-1} a_{t-1} x_t$ of distinct vertices $x_1, x_2, \ldots, x_t$ and distinct arcs $a_1, a_{2},\ldots, a_{t-1}$ such that for any $i\in \{2,3,\ldots, t-1\}$, either $x_{i-1}$ precedes $x_{i}$ in $a_{i-1}$ and $x_{i+1}$ precedes $x_{i}$ in $a_{i}$, or $x_{i}$ precedes $x_{i-1}$ in $a_{i-1}$ and $x_{i}$ precedes $x_{i+1}$ in $a_{i}$.
Sometimes, we use $x_{1}$ $^{\underrightarrow{a_{1}}}x_{2}$ $^{\underleftarrow{a_{2}}}x_{3}\cdots x_{t}$ (or $x_{1}$ $^{\underleftarrow{a_{1}}}x_{2}$ $^{\underrightarrow{a_{2}}}x_{3}\cdots x_{t}$) to denote the antidirected path $P$.
An \textbf{antidirected hamiltonian path} of $H$ is an antidirected path that contains every vertex in $H$.
\par The tournaments, as a special type of $k$-hypertournament, there exist two of the most basic theorems about hamiltonian paths and hamiltonian cycles: every tournament has a hamiltonian path (R\'{e}dei's theorem) \cite{Redei1934}, and every strong tournament
has a hamiltonian cycle (Camion's theorem) \cite{Camion1959}. It is natural for one to ask whether these theorems hold for hypertournaments as well.
Gutin \cite{Gutin1997} proved that every $k$-hypertournament on $n\geq k+1$ vertices has a hamiltonian path, and every strong $k$-hypertournament on $n\geq k+2$ vertices has a hamiltonian cycle.
\par Furthermore, Gutin asked whether other well-known results on tournaments could be extended to hypertournaments.  Petrovi\'{c} \cite{Petrovic2006}, Yang \cite{Yang2009} and Li \cite{Li2013} extend Moon's theorem \cite{Moon1966} on tournaments (every strong tournament is vertex-pancyclic) to hypertournaments. Particularly, Li \cite{Li2013} proved that every strong $k$-hypertournament with $n$ vertices, where $3\leq k \leq
n-2$, is vertex-pancyclic, making this the best possible result. The results of arc-pancyclic and pancyclic arc in tournaments have also been extended to hypertournaments by a number of authors, who have obtained some related results (cf.  Surmacs \cite{Surmacs2017}, Xiao \cite{Lan2023}, Guo \cite{Guo2014} and Li \cite{Li2016}).
\par In this paper, we want to turn our attention to the antidirected hamiltonian paths of hypertournaments. Our goal is to extend Gr\"{u}nbaum's theorem to hypertournaments. Furthermore, we prove that except for four hypertournaments, $T_3^{c}, T_5^{c}, T_7^{c}$ and $H_{4}$,
 every $k$-hypertournament with $n$ vetices, where $2\leq k\leq n-1$, has an antidirected hamiltonian path.
\section{Antidirected hamiltonian paths in $k$-hypertournaments}
~~~ In this section, we study the existence of antidirected hamiltonian path of $k$-hypertournament with $n$ vertices, where $2\leq k\leq n-1$. Firstly, we define a $3$-hypertournament with $4$ vertices, denoted by $H_{4}$, such that
 $V(H_{4})=\{1,2,3,4\}$ and $A(H_{4})=\{(2,3,4),(4,1,2),(3,4,1),(1,2,3)\}$.
 Next, we prove that $H_{4}$ does not contain antidirected hamiltonian path.
 \begin{Proposition}\label{NADH}
Let $H$ be a $k$-hypertournament with $n$ vertices. If $H\cong H_{4}$, then $H$ does not contain an antidirected hamiltonian path.
\end{Proposition}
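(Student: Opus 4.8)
The plan is to reduce the statement to a very short finite check by first reading off the precedence structure of $H_4$ and then exploiting its cyclic symmetry. First I would compute, for each of the six pairs of vertices, which arcs realise each order. This gives that $1$ precedes $2$, $2$ precedes $3$, $3$ precedes $4$ and $4$ precedes $1$ in \emph{every} arc containing the corresponding pair, that is $A_{H_4}(2,1)=A_{H_4}(3,2)=A_{H_4}(4,3)=A_{H_4}(1,4)=\emptyset$, while for the two remaining pairs one has $A_{H_4}(1,3)=\{(1,2,3)\}$, $A_{H_4}(3,1)=\{(3,4,1)\}$, $A_{H_4}(2,4)=\{(2,3,4)\}$ and $A_{H_4}(4,2)=\{(4,1,2)\}$. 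I would also record that the cyclic permutation $(1\,2\,3\,4)$ sends the arc set of $H_4$ to itself, hence is an automorphism; this lets me normalise the middle vertex of a hypothetical path.

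Next I would argue by contradiction. Since $H_4$ has four vertices and four arcs, an antidirected hamiltonian path is a sequence $x_1a_1x_2a_2x_3a_3x_4$ through all four vertices with $a_1,a_2,a_3$ pairwise distinct, and directly from the definition it has one of the two alternating shapes $x_1\to x_2\leftarrow x_3\to x_4$ or $x_1\leftarrow x_2\to x_3\leftarrow x_4$, where in the first, say, $x_1$ precedes $x_2$ in $a_1$, $x_3$ precedes $x_2$ in $a_2$, and $x_3$ precedes $x_4$ in $a_3$. As the reverse of an antidirected path is again an antidirected path, and reversing interchanges the two shapes, it suffices to rule out the first; and applying a suitable power of the automorphism $(1\,2\,3\,4)$ I may also assume $x_2=1$. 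Then $A_{H_4}(x_1,1)\neq\emptyset$ and $A_{H_4}(x_3,1)\neq\emptyset$, which by the data above forces $x_1,x_3\in\{3,4\}$, so $\{x_1,x_3\}=\{3,4\}$ and $x_4=2$. Since $x_3$ precedes $x_4=2$ we need $A_{H_4}(x_3,2)\neq\emptyset$, and because $A_{H_4}(3,2)=\emptyset$ this forces $x_3=4$ and $x_1=3$.

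The remaining step is to see that the one surviving candidate $3\,a_1\,1\,a_2\,4\,a_3\,2$ cannot exist: $a_1\in A_{H_4}(3,1)=\{(3,4,1)\}$, $a_3\in A_{H_4}(4,2)=\{(4,1,2)\}$, and $a_2\in A_{H_4}(4,1)=\{(4,1,2),(3,4,1)\}$, so $a_1=(3,4,1)$, $a_3=(4,1,2)$, and the constraint $a_2\neq a_1$ forces $a_2=(4,1,2)=a_3$, contradicting distinctness of the arcs. This contradiction finishes the proof. The only mildly delicate point is the symmetry reduction; without it one simply repeats the same two-line computation for each of the four choices of the middle vertex, so I do not expect any real obstacle — the proposition is just a small case analysis that the cyclic symmetry of $H_4$ compresses to essentially a single line.
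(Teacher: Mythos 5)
Your proof is correct and follows essentially the same route as the paper's: fix the alternating shape of a hypothetical path $x_1a_1x_2a_2x_3a_3x_4$ up to reversal, read off the precedence sets $A_{H_4}(x,y)$, and derive a contradiction with the distinctness of the arcs. The only difference is cosmetic: where the paper treats the four choices of $x_2$ as symmetric cases (one done explicitly, the rest by ``likewise''), you invoke the cyclic automorphism $(1\,2\,3\,4)$ of $H_4$ to normalise $x_2=1$, which is a clean and valid way to compress the same finite check.
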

 \begin{proof}
 To prove that, suppose that $H$ contains an antidirected hamiltonian path $P$, then by the definition of antidirected hamiltonian path, we may assume that $P=x_{1}$ $^{\underrightarrow{a_{1}}}x_{2}$ $^{\underleftarrow{a_{2}}}x_{3}$ $^{\underrightarrow{a_{3}}}x_{4}$, where $\{x_{1},x_{2},x_{3},x_{4}\}=\{1,2,3,4\}$.
 Assume that $x_{2}=1$, then $\{x_{1},x_{3}\}=\{3,4\}, x_{4}=2$ and $\{a_{1},a_{2}\}=\{(4,1,2),$
 $(3,4,1)\}$. Then $a_{3}\in A_{H}(3,2)\cup A_{H}(4,2)$,  but, this is impossible because $A(H_{4})-\{(4,1,2),$
 $(3,4,1)\}=\{(2,3,4),(1,2,3)\}$ and $\{(2,3,4),(1,2,3)\}\cap (A_{H}(3,2)\cup A_{H}(4,2))=\emptyset$. Likewise, if $x_{2}\in \{2,3,4\}$, then a contradiction occurs. Therefore, $H$ does not contain an antidirected hamiltonian path. This completes the proof of the proposition.
 \end{proof}

The main result of this section is the following.
\begin{Theorem}\label{ADH}
Let $H$ be a $k$-hypertournament with $n$ vertices, where $2\leq k\leq n-1$. Then $H$ contains an antidirected hamiltonian path if and only if $H\not\in \{ H_{4},T^{c}_{3},T^{c}_{5},T^{c}_{7}\}$.
\end{Theorem}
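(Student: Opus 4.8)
The ``only if'' direction is already handled: Proposition~\ref{NADH} rules out $H_4$, and Gr\"unbaum's Theorem~\ref{ADHT} rules out $T_3^c, T_5^c, T_7^c$ (these are the $k=2$ exceptions). So the work is the ``if'' direction: every $k$-hypertournament $H$ on $n$ vertices with $2\le k\le n-1$, other than the four listed, has an antidirected hamiltonian path. I would split on $k$. For $k=2$ this is exactly Theorem~\ref{ADHT}. For $k=n-1$ there is a standard duality: a $k$-subset is the complement of a single vertex, so an $(n-1)$-hypertournament is essentially equivalent to a ``reversed'' tournament structure (each $(n-1)$-arc is determined by which vertex is, say, last), and one can transfer the $k=2$ result. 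The bulk of the argument is the range $3\le k\le n-2$, and for that I would argue by induction on $n$, the base cases being small $n$ (roughly $n\le 6$ or so, handled by the structural/ad hoc analysis together with the fact that $H_4$ is the only small exception).

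\textbf{Inductive step.} Given $H$ on $n$ vertices with $3\le k\le n-2$, pick a vertex $v$ and delete it. The induced sub-$k$-hypertournament $H-v$ has $n-1$ vertices with $3\le k\le n-1-2 = n-3 < (n-1)-1$, so by induction $H-v$ has an antidirected hamiltonian path $P = x_1\,^{\underrightarrow{a_1}} x_2\,^{\underleftarrow{a_2}} x_3\cdots x_{n-1}$ — unless $H-v\in\{H_4, T_3^c, T_5^c, T_7^c\}$, but $H_4$ is a $3$-hypertournament on $4$ vertices (so if it appears we are in a bounded base case) and $T_3^c, T_5^c, T_7^c$ are tournaments ($k=2$), hence cannot arise for $k\ge 3$. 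So for $n$ large enough $H-v$ always has such a path $P$. Now I must insert $v$ into $P$ to get an antidirected hamiltonian path of $H$. The idea is to use the ``slots'' of $P$: I want to attach $v$ at an end, or splice it between two consecutive vertices $x_i, x_{i+1}$ in a way compatible with the alternation. Because $k\ge 3$, for any vertex $x_i$ and any choice of which of $v, x_i$ should ``precede'' in an arc, there is a lot of freedom: the arc containing $v, x_i$ together with $k-2$ further vertices can be chosen, and counting via $|A_H(v,x_i)| + |A_H(x_i,v)| = \binom{n-2}{k-2}$ shows one of the two is large. The key lemma I would isolate: if $v$ fails to extend $P$ at either end and fails to be splice-able at every internal position, then the ``reachability pattern'' between $v$ and the $x_i$ is so rigid that $H$ must be one of the exceptional tournaments — a contradiction since $k\ge 3$.

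\textbf{Main obstacle.} The hard part is exactly this insertion/splicing step. Unlike the tournament case, an arc of a $k$-hypertournament records a \emph{linear order} on $k$ vertices, and reusing an arc is forbidden (arcs on the path must be distinct), so one cannot freely ``reroute'' the way Gr\"unbaum does. The delicate point: to splice $v$ between $x_i$ and $x_{i+1}$, the arc $a_i$ currently joining them gets replaced, and I need two \emph{new, distinct} arcs (one for $x_i$–$v$, one for $v$–$x_{i+1}$) with the correct precedence directions dictated by the alternation at position $i$; moreover inserting at an end only needs one new arc but with a prescribed direction. I expect to need a careful counting argument, in the spirit of the identity $|A_H(x,y)|+|A_H(y,x)|=\binom{n-2}{k-2}$, to show that enough suitable arcs exist: roughly, as we slide $i$ along $P$, the parity/direction requirement alternates, and $v$ cannot be ``blocked'' in both parities at every position unless $n$ is tiny. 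I would also want a separate short treatment of the parity subtlety that for $n$ odd one typically wants a double point (cf. Rosenfeld \cite{Rosenfeld1972}), which may force choosing $v$ and the attachment point more carefully, or strengthening the induction hypothesis to ``has an antidirected hamiltonian path with prescribed end-type.'' Finally, the base cases ($n\le 6$, all $k$ in range) must be checked by hand or by the structural observations, confirming $H_4$ is the unique new exception; this is finite but must be done with care, and is where I would expect the write-up to be longest.
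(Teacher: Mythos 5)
Your overall architecture matches the paper's: the ``only if'' direction from Theorem~\ref{ADHT} and Proposition~\ref{NADH}, a separate treatment of $k=n-1$, and for $k<n-1$ an induction on $n$ that deletes a vertex, invokes the hypothesis on $H-v$ (noting that the exceptional structures can only reappear for small parameters), and then reinserts $v$ into the resulting antidirected hamiltonian path via counting with $|A_H(x,y)|+|A_H(y,x)|=\binom{n-2}{k-2}$. That is genuinely the right plan for Case~2, and you correctly flag the parity/end-type subtlety. However, there are two concrete gaps. First, your $k=n-1$ case rests on a ``standard duality'' that does not exist: an $(n-1)$-arc is a full linear order on $n-1$ vertices, not merely a choice of last vertex, and an antidirected hamiltonian path must use $n-1$ \emph{distinct} arcs out of only $n$ available, so the precedence constraints and the distinctness constraint interact in a way that has no clean translation to a tournament. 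The paper instead runs a separate induction on $k$ here, contracting $H$ to a $(k-1)$-hypertournament $H'$ on $n-1$ vertices (deleting one arc entirely, deleting vertex $n$ from the rest, and deleting vertex $1$ from one designated arc $b$), and must handle $H'\cong T_3^c$ and $H'\cong H_4$ by explicit constructions; this is where $H_4$ actually emerges as the obstruction, and it is the longest part of the proof. Your sketch would not recover this.

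Second, in the insertion step your stated key lemma --- that if $v$ cannot be spliced at any position then the reachability pattern is so rigid that $H$ must be an exceptional tournament, ``a contradiction since $k\ge 3$'' --- is not what happens. The totally blocked configuration ($A_H(x_i,n)=\emptyset$ for all odd $i$ and $A_H(n,x_i)=\emptyset$ for all even $i$) is perfectly consistent for $k\ge 3$ and yields no contradiction. The paper's resolution is constructive: blocking in one direction forces the opposite sets to be full ($|A_H(n,x_1)|=\binom{n-2}{k-2}$, and $A_H(x_{i+1},n)\cap A_H(n,x_i)\neq\emptyset$ hence $A_H(x_{i+1},x_i)\neq\emptyset$), which supplies $n-1$ fresh, pairwise distinct arcs building an entirely new antidirected hamiltonian path $n\rightarrow x_1\leftarrow x_2\rightarrow\cdots$ with all orientations reversed relative to $P$; verifying the distinctness of these arcs requires a separate disjointness analysis that genuinely depends on $k$ (the paper splits into $k=3$, $k=4$, and $k\ge 5$). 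So the proposal is a reasonable outline of Case~2 but leaves its central lemma unproved with the wrong anticipated mechanism, and the $k=n-1$ case as you describe it would fail.
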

\begin{proof}
If $H\in \{T^{c}_{3},T^{c}_{5},T^{c}_{7}\}$, then by Theorem \ref{ADHT},
$H$ does not contain an antidirected hamiltonian path.
If $H\cong H_{4}$, then by Proposition \ref{NADH}, $H$ does not contain an antidirected hamiltonian path. Hence suppose that $H\not\in \{ H_{4},T^{c}_{3},T^{c}_{5},T^{c}_{7}\}$, we want to prove that
$H$ contains an antidirected hamiltonian path.
 \par Let $H=(V(H), A(H))$ be a $k$-hypertournament with $V(H)=\{1,2, \ldots, n\}$. We consider the cases $k=n-1$ and $k<n-1$ separately.\\
 \textbf{Case 1} $k=n-1$.
 \par Then $|A(H)|=\big(\begin{array}{l}
n \\
k
\end{array}\big)=n$. We proceed by induction on $k \geq 2$. By Theorem \ref{ADHT}, this theorem holds for $k=2$. Hence, suppose that $k \geq 3$. Without loss of generality, assume that $H$ contains the arc $a$ that has the vertices $2,3, \ldots, n$ and for any $i\in \{2,3,\ldots,n-1\}$, vertex $i$ precedes vertex $n$ in arc $a$.
  Let $b$ be the $\operatorname{arc}$ of $H$ that has the vertices $1,2, \ldots, n-1$. Consider the $(k-1)$-hypertournament $H'=(V(H'), A(H'))$ obtained from $H$ by deleting the arc $a$, deleting vertex $n$ from the arcs in $A(H)-\{a, b\}$, and finally deleting vertex 1 from $b$. So, $V(H')=\{1,2, \ldots, n-1\}$, $A(H')=\{e': e'$ is $e$ without vertex $n, e \in A(H)-\{a, b\}\} \cup\{b'\}$, where $b'$ is $b$ without the vertex 1. As $n=k+1\geq 4$, we have $H'\not\in \{T^{c}_{5},T^{c}_{7}\}$.
 \par Assume first that $H'\cong T^{c}_{3}$. Then without loss of generality, assume that
\[\mbox{
 $A(H')=\{a'_{1}=(1,2), a'_{2}=(3,1),b'=(2,3)\}$.
 }\]
 These arcs correspond to arcs $a_{1},a_{2},b$ in $A(H)$ such that $a_{1}$ contains vertices 1,2,4 and $a_{2}$ contains vertices 1,3,4.
If $a_{1}\in A_{H}(1,4)$, then $1$ $^{\underrightarrow{a_{1}}}4$ $^{\underleftarrow{a}}2$ $^{\underrightarrow{b}}3$
 is an antidirected hamiltonian path in $H$. If $a_{1}=(4,1,2)$ and $a\in A_{H}(3,2)$, then $2$ $^{\underleftarrow{a}}3$ $^{\underrightarrow{a_{2}}}1$ $^{\underleftarrow{a_{1}}}4$
 is an antidirected hamiltonian path in $H$.
 Hence assume that
\begin{equation}\label{a1}
  \mbox{$a_{1}=(4,1,2)$ and $a=(2,3,4)$.}
\end{equation}
If $b\in A_{H}(2,1)$, then $3$ $^{\underleftarrow{a}}2$ $^{\underrightarrow{b}}1$ $^{\underleftarrow{a_{1}}}4$
 is an antidirected hamiltonian path in $H$. If $a_{2}\in A_{H}(1,4)$, then
 $2$ $^{\underleftarrow{a_{1}}}1$ $^{\underrightarrow{a_{2}}}4$ $^{\underleftarrow{a}}3$
 is an antidirected hamiltonian path in $H$. Hence assume that
\begin{equation}\label{b}
  \mbox{$b=(1,2,3)$ and $a_{2}\in A_{H}(4,1)$.}
\end{equation}
We can claim $a_{2}\in A_{H}(4,3)$. Otherwise, if $a_{2}\in A_{H}(3,4)$, then
$a_{2}=(3,4,1)$.
This, together with (\ref{a1}) and (\ref{b}), imply that $H\cong H_{4}$, contrary to $H\not\in \{ H_{4},T^{c}_{3},T^{c}_{5},T^{c}_{7}\}$. Hence $a_{2}\in A_{H}(4,3)$, and so
 $4$ $^{\underrightarrow{a_{2}}}3$ $^{\underleftarrow{b}}1$ $^{\underrightarrow{a_{1}}}2$
 is an antidirected hamiltonian path in $H$.
\par Assume now that $H'\cong H_{4}$, then suppose that  $A(H')=\{b'=(2,3,4),a'_{1}=(4,1,2),a'_{2}=(3,4,1),a'_{3}=(1,2,3)\}$. These arcs correspond to arcs $b,a_{1},a_{2},a_{3}$ in $A(H)$ such that $a_{1}$ contains vertices 1,2,4,5,  $a_{2}$ contains vertices 1,3,4,5 and $a_{3}$ contains vertices 1,2,3,5. If $a_{1}\in A_{H}(5,4)$ and $a\in A_{H}(2,3)$, then $1$ $^{\underrightarrow{a_{3}}}3$ $^{\underleftarrow{a}}2$ $^{\underrightarrow{b}}4$$^{\underleftarrow{a_{1}}}5$
 is an antidirected hamiltonian path in $H$;
 if $a_{1}\in A_{H}(5,4)$ and $a\in A_{H}(3,2)$, then $5$ $^{\underrightarrow{a_{1}}}4$ $^{\underleftarrow{a_{2}}}3$ $^{\underrightarrow{a}}2$$^{\underleftarrow{a_{3}}}1$
 is an antidirected hamiltonian path in $H$;
  if $a_{1}\in A_{H}(4,5)$, then
 $1$ $^{\underleftarrow{a_{2}}}4$ $^{\underrightarrow{a_{1}}}5$ $^{\underleftarrow{a}}2$ $^{\underrightarrow{b}}3$
 is an antidirected hamiltonian path in $H$.
\par Finally, assume that $H'\not\in \{T_{3}^{c},H_{4}\}$, and so $H'\not\in \{ H_{4},T^{c}_{3},T^{c}_{5},T^{c}_{7}\}$.
By the induction hypothesis, $H'$ has an antidirected hamiltonian path.
If $n$ is odd, then $n-1$ is even and $H'$ contains an antidirected hamiltonian path with one ending vertex as starting vertex and another ending vertex as terminal vertex; if $n$ is even, then $n-1$ is odd and, $H'$ contains an antidirected hamiltonian path such that two ending vertices are starting vertices or $H'$ contains an antidirected hamiltonian path such that two ending vertices are terminal vertices.
\par Assume first that $y_{1}$ $^{\underleftarrow{b'_{1}}}y_{2}$ $^{\underrightarrow{b'_{2}}}y_{3}\cdots y_{n-2}$$^{\underrightarrow{b'_{n-2}}}y_{n-1}$ is an antidirected hamiltonian path of $H'$ such that $y_{1}$ and $y_{n-1}$ are terminal vertices. This antidirected hamiltonian path corresponds to the antidirected path $P=y_{1}$ $^{\underleftarrow{b_{1}}}y_{2}$ $^{\underrightarrow{b_{2}}}y_{3}\cdots y_{n-2}$$^{\underrightarrow{b_{n-2}}}y_{n-1}$ in $H$. Clearly, $\{y_1, y_{2} \ldots, y_{n-1}\}=$ $\{1,2, \ldots, n-1\}$ and $A(H)-\{b_1, \ldots, b_{n-2}\}$ consists of the arc $ a$ and another arc $d$.
If $y_{1}\not=1$ and $y_{n-1}\not=1$, then $a$ contains vertices $y_{1}$ and $y_{n-1}$, and so $a\in A_{H}(y_{1},y_{n-1})$ or $a\in A_{H}(y_{n-1},y_{1})$. Thus, $y_{2}$ $^{\underrightarrow{b_{2}}}y_{3}\cdots y_{n-2}$$^{\underrightarrow{b_{n-2}}}y_{n-1}$ $^{\underleftarrow{a}}y_{1}$ or $y_{n-1}$ $^{\underrightarrow{a}}y_{1}$ $^{\underleftarrow{b_{1}}}y_{2}$ $^{\underrightarrow{b_{2}}}y_{3}\cdots y_{n-2}$ is an antidirected hamiltonian of $H'$. Likewise, if $d$ contains vertices $y_{1}$ and $y_{n-1}$, then an antidirected hamiltonian of $H'$ can be found. Hence assume that $y_{1}=1$ or $y_{n-1}=1$, and $d$ does not contain vertex $y_{1}$ or $d$ does not contain vertex $y_{n-1}$. Then $y_{2}\not=1$ and $y_{n-2}\not=1$. Since $k=n-1$, we have that
$d$ only does not contain vertex $y_{1}$ or $d$ only does not contain vertex $y_{n-1}$. Without loss of generality, assume that $d$ only does not contain vertex $y_{1}$. Then $d$ contains vertices $n$ and $y_{n-1}$. If $d\in A_{H}(n,y_{n-1})$, then $y_{1}$ $^{\underleftarrow{b_{1}}}y_{2}$ $^{\underrightarrow{b_{2}}}y_{3}\cdots y_{n-2}$$^{\underrightarrow{b_{n-2}}}y_{n-1}$$^{\underleftarrow{d}} n$
is an antidirected hamiltonian path in $H$; if $d\in A_{H}(y_{n-1},n)$, then $y_{1}$ $^{\underleftarrow{b_{1}}}y_{2}$ $^{\underrightarrow{b_{2}}}y_{3}\cdots y_{n-2}$ $^{\underrightarrow{a}}n$$^{\underleftarrow{d}}x_{n-1}$
is an antidirected hamiltonian path in $H$.

\par Assume now that
$x_{1}$ $^{\underrightarrow{a'_{1}}}x_{2}$ $^{\underleftarrow{a'_{2}}}x_{3}\cdots x_{n-1}$ is an antidirected hamiltonian path of $H'$ such that $x_{1}$ is starting vertex. This antidirected hamiltonian path corresponds to the antidirected path $Q=x_{1}$ $^{\underrightarrow{a_{1}}}x_{2}$ $^{\underleftarrow{a_{2}}}x_{3}\cdots x_{n-1}$ in $H$. Clearly, $\{x_1, x_{2} \ldots, x_{n-1}\}=$ $\{1,2, \ldots, n-1\}$ and $A(H)-\{a_1, \ldots, a_{n-2}\}$ consists of the arc $ a$ and another arc $c$. It follows that
\[
\mbox{arcs $a_{1},a_{2},\ldots,a_{n-2}$ and $c$ all include the vertex $1$.}
\]

\par If $x_{1}\not=1$, then $n$$^{\underleftarrow{a}}x_{1}$ $^{\underrightarrow{a_{1}}}x_{2}$ $^{\underleftarrow{a_{2}}}x_{3}\cdots x_{n-1}$ is an antidirected hamiltonian path in $H$.
If $x_{1}=1$ and $n$ is even, then $a_{n-2}\in A_{H}(x_{n-1},x_{n-2})$ and $x_{1}$ $^{\underrightarrow{a_{1}}}x_{2}$ $^{\underleftarrow{a_{2}}}x_{3}\cdots x_{n-2}$ $^{\underleftarrow{a_{n-2}}}x_{n-1}$$^{\underrightarrow{a}} n$ is an antidirected hamiltonian path in $H$. Hence from now on assume that
\begin{equation*}
  \mbox{$x_{1}=1$ and $n=k+1\geq 5$ is odd.}
\end{equation*}
And so
 \begin{equation*}
  \mbox{$a_{n-2}\in A_{H}(x_{n-2},x_{n-1})$.}
\end{equation*}
Next,  consider two subcases.\\
 \textbf{Subcase 1.1} $c\not=b$.
\par Then the arc $c$ contains vertices 1 and $n$ as $a\not=c$ and $k=n-1$.
Suppose that the arc $c$ contains the vertex $x_{n-1}$.
If $c\in A_{H}(n,x_{n-1})$, then $1$ $^{\underrightarrow{a_{1}}}x_{2}$ $^{\underleftarrow{a_{2}}}x_{3}\cdots x_{n-2}$ $^{\underrightarrow{a_{n-2}}}x_{n-1}$$^{\underleftarrow{c}} n$
is an antidirected hamiltonian path in $H$; if $c\in A_{H}(x_{n-1},n)$, then $1$ $^{\underrightarrow{a_{1}}}x_{2}$ $^{\underleftarrow{a_{2}}}x_{3}\cdots x_{n-3}$$^{\underleftarrow{a_{n-3}}}x_{n-2}$ $^{\underrightarrow{a}}n$$^{\underleftarrow{c}}x_{n-1}$
is an antidirected hamiltonian path in $H$. Hence assume that the arc $c$ does not contain the vertex $x_{n-1}$. And hence
\begin{equation*}
  \mbox{arcs $a_{1},a_{2},\ldots,a_{n-2}$ all contain vertex $x_{n-1}$ and arc $c$ contains vertices $1,x_{2},\ldots,x_{n-2},n$.}
\end{equation*}
If $c\in  A_{H}(1,n)$, then $n$$^{\underleftarrow{c}}1$ $^{\underrightarrow{a_{1}}}x_{2}$ $^{\underleftarrow{a_{2}}}x_{3}\cdots x_{n-1}$ is an antidirected hamiltonian path in $H$; if $a_{n-2}\in A_{H}(1,x_{n-1})$, then $x_{n-1}$ $^{\underleftarrow{a_{n-2}}}1$ $^{\underrightarrow{a_{1}}}x_{2}$ $^{\underleftarrow{a_{2}}}x_{3}\cdots x_{n-2}$$^{\underrightarrow{a}}n$
is an antidirected hamiltonian path in $H$. Hence assume that
 $c\in  A_{H}(n,1)$ and $a_{n-2}\in A_{H}(x_{n-1},1)$.
 \par Suppose that the arc $a_{1}$ contains the vertex $n$.
 If $a_{1}\in A_{H}(n,x_{n-1})$, then $x_{2}$ $^{\underleftarrow{a_{2}}}x_{3}$ $^{\underrightarrow{a_{3}}}x_{4}\cdots x_{n-2}$
 $^{\underrightarrow{a_{n-2}}}$
 $ x_{n-1}$ $^{\underleftarrow{a_{1}}}n$$^{\underrightarrow{c}}1$
 is an antidirected hamiltonian path in $H$. If $a_{1}\in A_{H}(x_{n-1},n)$, then
$x_{2}$ $^{\underleftarrow{a_{2}}}x_{3}$ $^{\underrightarrow{a_{3}}}x_{4}\cdots x_{n-2}$$^{\underrightarrow{a}}n$ $^{\underleftarrow{a_{1}}}x_{n-1}$$^{\underrightarrow{a_{n-2}}}1$
is an antidirected hamiltonian path in $H$.
\par Assume now that the arc $a_{1}$ does not contain the vertex $n$, then $a_{1}=b$. Thus,
\begin{equation*}
  \mbox{arcs $a_{2},a_{3},\ldots,a_{n-2}$ all contain vertex $n$.}
\end{equation*}
If $a_{n-2}\in A_{H}(x_{n-2},n)$, then $1$ $^{\underrightarrow{a_{1}}}x_{2}$ $^{\underleftarrow{a_{2}}}x_{3}\cdots x_{n-3}$$^{\underleftarrow{a_{n-3}}}x_{n-2}$ $^{\underrightarrow{a_{n-2}}}n$$^{\underleftarrow{a}}x_{n-1}$
 is an antidirected hamiltonian path in $H$. If $a_{n-2}\in A_{H}(x_{n-1},n)$, then $1$ $^{\underrightarrow{a_{1}}}x_{2}$ $^{\underleftarrow{a_{2}}}x_{3}\cdots x_{n-3}$$^{\underleftarrow{a_{n-3}}}x_{n-2}$ $^{\underrightarrow{a}}n$$^{\underleftarrow{a_{n-2}}}x_{n-1}$
  is an antidirected hamiltonian path in $H$. Hence assume that
  \begin{equation*}
  \mbox{$a_{n-2}\in A_{H}(n,x_{n-2})\cap A_{H}(n,x_{n-1})$.}
\end{equation*}
  If $a\in A_{H}(x_{n-2},x_{n-1})$, then  $1$ $^{\underrightarrow{a_{1}}}x_{2}$ $^{\underleftarrow{a_{2}}}x_{3}\cdots x_{n-3}$$^{\underleftarrow{a_{n-3}}}x_{n-2}$ $^{\underrightarrow{a}}x_{n-1}$$^{\underleftarrow{a_{n-2}}}n$
  is an antidirected hamiltonian path in $H$. Hence assume that
\begin{equation*}
  \mbox{$a\in  A_{H}(x_{n-1},x_{n-2})$.}
\end{equation*}
If $c\in A_{H}(x_{n-2},n)$, then
$1$ $^{\underrightarrow{a_{1}}}x_{2}$ $^{\underleftarrow{a_{2}}}x_{3}\cdots x_{n-3}$$^{\underleftarrow{a_{n-3}}}x_{n-2}$ $^{\underrightarrow{c}}n$$^{\underleftarrow{a}}x_{n-1}$
 is an antidirected hamiltonian path in $H$; if $c\in A_{H}(n,x_{n-3})$, then
 $1$ $^{\underrightarrow{a_{1}}}x_{2}$ $^{\underleftarrow{a_{2}}}x_{3}\cdots x_{n-3}$$^{\underleftarrow{c}}n$ $^{\underrightarrow{a_{n-2}}}x_{n-2}$$^{\underleftarrow{a}}x_{n-1}$
 is an antidirected hamiltonian path in $H$. Hence assume that
\begin{equation*}
  \mbox{$c\in A_{H}(n,x_{n-2})\cap A_{H}(x_{n-3},n)$.}
\end{equation*}
\par If $a_{n-3}\in A_{H}(x_{n-1},x_{n-3})$, then
$1$ $^{\underrightarrow{a_{1}}}x_{2}$ $^{\underleftarrow{a_{2}}}x_{3}\cdots x_{n-3}$$^{\underleftarrow{a_{n-3}}}x_{n-1}$ $^{\underrightarrow{a}}x_{n-2}$$^{\underleftarrow{c}}n$
 is an antidirected hamiltonian path in $H$; if $a_{n-3}\in A_{H}(x_{n-3},x_{n-1})$ and $n\geq 7$, then
 $1$ $^{\underrightarrow{a_{1}}}x_{2}$ $^{\underleftarrow{a_{2}}}x_{3}\cdots x_{n-4}$$^{\underrightarrow{a}}n$ $^{\underleftarrow{c}}x_{n-3}$$^{\underrightarrow{a_{n-3}}}x_{n-1}$
 $^{\underleftarrow{a_{n-2}}}x_{n-2}$
  is an antidirected hamiltonian path in $H$; if $a_{n-3}\in A_{H}(x_{n-3},x_{n-1})$ and $n\leq 6$, then $n=5$ as $n\geq5$ is odd. Hence
  $n$ $^{\underleftarrow{a}}x_{2}$ $^{\underrightarrow{a_{2}}}x_{4}$$^{\underleftarrow{a_{3}}}x_{3}$ $^{\underrightarrow{a_{1}}}1$
   is an antidirected hamiltonian path in $H$ for $a_{1}\in A_{H}(x_{3},1)$ or
   $1$ $^{\underrightarrow{a_{1}}}x_{3}$ $^{\underleftarrow{c}}n$ $^{\underrightarrow{a_{3}}}x_{4}$$^{\underleftarrow{a_{2}}}x_{2}$
  is an antidirected hamiltonian path in $H$ for $a_{1}\in A_{H}(1,x_{3})$.\\
 \textbf{Subcase 1.2} $c=b$.
\par Then arcs $a_{1},a_{2},\ldots,a_{n-2}$ all contain vertices 1 and $n$. If $c\in A_{H}(x_{n-2},x_{n-1})$, then $P'=
1$ $^{\underrightarrow{a_{1}}}x_{2}$ $^{\underleftarrow{a_{2}}}x_{3}\cdots x_{n-3}$
$^{\underleftarrow{a_{n-3}}}x_{n-2}$ $^{\underrightarrow{c}}x_{n-1}$ is an antidirected path in $H$. Since $a_{n-2}\not=b$, one can construct an antidirected hamiltonian path in $H$ from $P'$ as in Subcase 1.1 when $c$ is replaced by $a_{n-2}$. Hence from now on assume that
\begin{equation*}
  \mbox{$c\in A_{H}(x_{n-1},x_{n-2})$.}
\end{equation*}
\par If $c\in A_{H}(1,x_{n-1})$, then
$x_{n-1}$$^{\underleftarrow{c}}1$ $^{\underrightarrow{a_{1}}}x_{2}$ $^{\underleftarrow{a_{2}}}x_{3}\cdots x_{n-3}$$^{\underleftarrow{a_{n-3}}}x_{n-2}$ $^{\underrightarrow{a}}n$
 is an antidirected hamiltonian path in $H$; if $c\in A_{H}(x_{n-1},1)$ and $a_{n-2}\in A_{H}(x_{n-2},1)$, then
$x_{2}$ $^{\underleftarrow{a_{2}}}x_{3}$ $^{\underrightarrow{a_{3}}}x_{4}\cdots x_{n-2}$$^{\underrightarrow{a_{n-2}}}1$ $^{\underleftarrow{c}}x_{n-1}$$^{\underrightarrow{a}}n$
  is an antidirected hamiltonian path in $H$; if $c\in A_{H}(x_{n-1},1)$ and $a_{n-2}\in A_{H}(1,x_{n-2})$, then
 $n$$^{\underleftarrow{a}}$$x_{n-1}$ $^{\underrightarrow{c}}x_{n-2}$$^{\underleftarrow{a_{n-2}}}1$ $^{\underrightarrow{a_{1}}}x_{2}$ $^{\underleftarrow{a_{2}}}x_{3}\cdots x_{n-4}$$^{\underrightarrow{a_{n-3}}}x_{n-3}$
 is an antidirected hamiltonian path in $H$.\\
\textbf{Case 2 }$k<n-1$.
\par  If there exists a vertex, denoted as $n$, such that a new $k$-hypertournament $H'$ is isomorphic to a member of $\{T^{c}_{3},T^{c}_{5},T^{c}_{7}\}$, obtained from $H$ by deleting the vertex $n$ along with all arcs in $A(H)$ containing the vertex $n$, then $k=2$ and assume that vertex set and arc set of $H'$ are depicted as in Figure 1. Since for any vertex $z_{i}\in V(H')$, $(z_{i},n)\in A(H)$ or $(n,z_{i})\in A(H)$.
Without loss of generality, we may assume that $(z_{1},n),(z_{2},n)\in A(H)$ (because $T^{c}_{3}\cong \overleftarrow{T}^{c}_{3},T^{c}_{5}\cong \overleftarrow{T}^{c}_{5} $ and $T^{c}_{7}\cong \overleftarrow{T}^{c}_{7}$). Thus,
$z_{3}\leftarrow z_{1}\rightarrow n\leftarrow z_{2}$
  is an antidirected hamiltonian path in $H'$ for $H'\cong T^{c}_{3}$ or $z_{1}\rightarrow n\leftarrow z_{2}\rightarrow z_{4}\leftarrow z_{3}\rightarrow z_{5}$
  is an antidirected hamiltonian path in $H'$ for $H'\cong T^{c}_{5}$ or $z_{1}\rightarrow n\leftarrow z_{2}\rightarrow z_{6}\leftarrow z_{5}\rightarrow z_{7} \leftarrow z_{3}\rightarrow z_{4}$
  is an antidirected hamiltonian path in $H'$ for $H'\cong T^{c}_{7}$.

\par
If there exists a vertex, denoted as $n$, such that a new $k$-hypertournament $H'$ is isomorphic to $H_{4}$, obtained from $H$ by deleting the vertex $n$ along with all arcs in $A(H)$ containing the vertex $n$, then $n=5$ and $k=3$. Let $A(H')=\{a_{1}=(2,3,4),a_{2}=(4,1,2), a_{3}=(3,4,1),a_{4}=(1,2,3)\}$. Clearly, $A(H')\subset A(H)$ and for any $i\in \{1,2,3,4\}$, $|A_{H}(i,5)\cup A_{H}(5,i)|=3$. Without loss of generality, assume that $|A_{H}(1,5)|\geq 2$. If $|A_{H}(2,5)|\geq 1$, then there exist two distinct arcs $a_{(2,5)}\in A_{H}(2,5)$ and $a_{(1,5)}\in A_{H}(1,5)$. Thus, $4$$^{\underleftarrow{a_{1}}}2$ $^{\underrightarrow{a_{(2,5)}}}5$ $^{\underleftarrow{a_{(1,5)}}}1 $$^{\underrightarrow{a_{4}}}3$
  is an antidirected hamiltonian path in $H$; if $|A_{H}(3,5)|\geq 1$, then there exist two distinct arcs $a_{(3,5)}\in A_{H}(3,5)$ and $a_{(1,5)}\in A_{H}(1,5)$. Thus, $4$$^{\underleftarrow{a_{1}}}3$ $^{\underrightarrow{a_{(3,5)}}}5$ $^{\underleftarrow{a_{(1,5)}}}1 $$^{\underrightarrow{a_{4}}}2$
  is an antidirected hamiltonian path in $H$. Hence assume that $|A_{H}(2,5)|=0$ and $|A_{H}(3,5)|=0$, then $|A_{H}(5,2)|=3$ and $|A_{H}(5,3)|=3$, and so there exist two distinct arcs $a_{(5,2)}\in A_{H}(5,2)$ and $a_{(5,3)}\in A_{H}(5,3)$. Thus, $4$$^{\underrightarrow{a_{2}}}2$ $^{\underleftarrow{a_{(5,2)}}}5$ $^{\underrightarrow{a_{(5,3)}}}3 $$^{\underleftarrow{a_{4}}}1$
  is an antidirected hamiltonian path in $H$.
\par Hence from now on assume that for any vertex $i$,
\begin{equation}\label{nh4}
  \mbox{the new $k$-hypertournament $H'$ is not isomorphic to a member of $\{T^{c}_{3},T^{c}_{5},T^{c}_{7},H_{4}\}$,}
\end{equation}
obtained from $H$ by deleting the vertex $i$ along with all arcs in $A(H)$ containing the vertex $i$.
\par We proceed by induction on $n \geq 4$. The case $n=4$ (and hence, $k=2$ ) follows from Theorem \ref{ADHT}. Therefore, suppose that $n \geq 5$. Consider the new $k$-hypertournament $H^{\prime \prime}$ obtained from $H$ by deleting the vertex $n$ along with all arcs in $A(H)$ containing the vertex $n$.
\par If $n=k+2$, then $H''$ is an $(n-2)$-hypertournament on $n-1$ vertices. By  $(\ref{nh4})$,  $H''\not\in \{H_{4},T^{c}_{3},T^{c}_{5},T^{c}_{7}\}$.
Hence, $H''$ has an antidirected hamiltonian path because of Case 1.

If $n> k+2$, then by $(\ref{nh4})$, $H''\not\in \{H_{4},T^{c}_{3},T^{c}_{5},T^{c}_{7}\}$ and by the induction hypothesis, $H''$ has an antidirected hamiltonian path.
\par In conclusion, $H''$ has an antidirected hamiltonian path. Assume that $P=x_{1}$ $^{\underrightarrow{a_{1}}}x_{2}$ $^{\underleftarrow{a_{2}}}x_{3}\cdots x_{n-1}$ is an antidirected hamiltonian path in $H''$ such that $x_{1}$ is a starting vertex. Then $P$ is an antidirected path of $H$. Assume first that $a_{n-2}\in A_{H}(x_{n-2},x_{n-1})$, then $n$ is odd.\\
\textbf{Subcase 2.1} $k=3$.
\par Assume first that there exists an integer $i_{1}\in \{1,3,\ldots,n-2\}$ such that
$A_{H}(x_{i_{1}},n)\not=\emptyset, A_{H}(n, x_{i_{1}+1})$
$=\emptyset,A_{H}(
 x_{i_{1}+2},n)=\emptyset,\ldots, A_{H}(
 x_{n-2},n)=\emptyset,A_{H}(n,
 x_{n-1})=\emptyset$, then $|A_{H}(x_{i_{1}+1},$
 $x_{i_{1}+2})|\geq |A_{H}(x_{i_{1}+1},n)\cap A_{H}(n,x_{i_{1}+2})|$
 $=1, \ldots,|A_{H}(x_{n-1},$
 $x_{n-2})|\geq |A_{H}(x_{n-1},n)\cap A_{H}(n,x_{n-2})|$
 $=1$.
As $k=3$, we have $ A_{H}(x_{i_{1}},n),$
$ A_{H}(x_{i_{1}+1},n)\cap A_{H}(n,x_{i_{1}+2}),\ldots,$
$ A_{H}(x_{n-1},n)\cap A_{H}(n,x_{n-2})$ are mutually arc-disjoint and
$ A_{H}(x_{i_{1}+1},n), A_{H}(x_{i_{1}+3},n)\cap A_{H}(n,x_{i_{1}+2}), $
$\ldots, A_{H}(x_{n-1},n)\cap A_{H}(n,$
$x_{n-2})$ are mutually arc-disjoint. Since $A_{H}(n, x_{i_{1}+1})=\emptyset$, we have that $|A_{H}( x_{i_{1}+1},n)|= \binom{n-2}{k-2}=\binom{n-2}{1}=n-2$,
and then
  there exist $n-i_{1}$ distinct arcs $a_{(x_{i_{1}},n)}\in A_{H}(x_{i_{1}},n), a_{(x_{i_{1}+1},x_{i_{1}+2})}$
  $\in A_{H}(x_{i_{1}+1},$
  $x_{i_{1}+2}),\ldots, a_{(x_{n-1},x_{n-2})}\in A_{H}(x_{n-1},x_{n-2})$ and $a_{(x_{i_{1}+1},n)}\in A_{H}$
  $(x_{i_{1}+1},n)$.
 Thus,
$x_{1}$ $^{\underrightarrow{a_{1}}}x_{2}$ $^{\underleftarrow{a_{2}}}x_{3}\cdots x_{i_{1}}$
$^{\underrightarrow{a_{(x_{i_{1}},n)}}}n$
$^{\underleftarrow{a_{(x_{i_{1}+1},n)}}}x_{i_{1}+1}$
$^{\underrightarrow{a_{(x_{i_{1}+1},x_{i_{1}+2})}}}x_{i_{1}+2}\cdots x_{n-2}$
$^{\underleftarrow{a_{(x_{n-1},x_{n-2})}}}x_{n-1}$
 is an antidirected hamiltonian path in $H$.
\par Likewise, if there exists an integer $i_{2}\in \{2,4,\ldots,n-1\}$ such that
$A_{H}(n ,x_{i_{2}})\not=\emptyset, A_{H}( x_{i_{2}+1},n)$
$=\emptyset,A_{H}(n,
 x_{i_{2}+2})=\emptyset\ldots, A_{H}( x_{n-2},n)=\emptyset,A_{H}(n,
 x_{n-1})=\emptyset$, then $H$ contains an antidirected hamiltonian path in $H$.
\par Assume now that for any integer $i\in \{1,2,\ldots,n-1\}$, if $i$ is odd, then $A_{H}(x_{i},n)=\emptyset$; if $i$ is even, then $A_{H}(n,x_{i})=\emptyset$.
Thus, $|A_{H}(x_{2},x_{1})|\geq |A_{H}(x_{2},n)\cap A_{H}(n,x_{1})|=1, \ldots,|A_{H}(x_{n-1},x_{n-2})|$
$\geq |A_{H}(x_{n-1},n)\cap A_{H}(n,x_{n-2})|=1$. As $k=3$, we have that $ A_{H}(x_{2},n)\cap A_{H}(n,x_{1}),\ldots, A_{H}(x_{n-1},n)$
$\cap A_{H}(n,x_{n-2})$ are mutually arc-disjoint and $A_{H}(n,x_{1}), A_{H}(x_{2},n)\cap A_{H}(n,x_{3}),\ldots,A_{H}(x_{n-1},n)\cap A_{H}(n,x_{n-2})$ are mutually arc-disjoint. Since $A_{H}(x_{1},n)=\emptyset$, we have that $|A_{H}( n,x_{1})|= n-2$, and then there exist $n-1$ distinct arcs $a_{(x_{2},x_{1})}\in A_{H}(x_{2},x_{1}),\ldots, a_{(x_{n-1},x_{n-2})}$
$\in A_{H}(x_{n-1},x_{n-2})$ and $a_{(n,x_{1})}\in A_{H}(n,x_{1})$.
 Thus,
$n$$^{\underrightarrow{a_{(n,x_{1})}}}x_{1}$ $^{\underleftarrow{a_{(x_{2},x_{1})}}}x_{2}$ $^{\underrightarrow{a_{(x_{2},x_{3})}}}x_{3}\cdots $
$ x_{n-2}$$^{\underleftarrow{a_{(x_{n-1},x_{n-2})}}}$
$x_{n-1}$
 is an antidirected hamiltonian path in $H$.\\
 \textbf{Subcase 2.2} $k=4$.
\par Assume first that there exists an integer $i_{1}\in \{1,3,\ldots,n-2\}$ such that
$A_{H}(x_{i_{1}},n)\not=\emptyset, A_{H}(n, x_{i_{1}+1})$
$=\emptyset,A_{H}(
 x_{i_{1}+2},n)=\emptyset,\ldots, A_{H}(
 x_{n-2},n)=\emptyset,A_{H}(n,
 x_{n-1})=\emptyset$, then $|A_{H}(x_{i_{1}+1},$
 $x_{i_{1}+2})|\geq |A_{H}(x_{i_{1}+1},n)\cap A_{H}(n,x_{i_{1}+2})|$
 $= n-3, \ldots,$
 $|A_{H}(x_{n-1},x_{n-2})|$
 $\geq |A_{H}(x_{n-1},n)\cap A_{H}(n,$
 $x_{n-2})|= n-3$.
As $k=4$, we have that $(A_{H}(x_{n-1},n)\cap A_{H}(n,x_{n-2}))\cap (A_{H}(x_{n-3},n)\cap A_{H}(n,x_{n-4}))=\emptyset, \ldots, (A_{H}(x_{n-1},n)\cap A_{H}(n,x_{n-2}))$
$\cap (A_{H}(x_{i_{1}+1},n)\cap A_{H}(n,x_{i_{1}+2}))=\emptyset$ and $|A_{H}( x_{i_{1}+1},n)|= \binom{n-2}{k-2}\geq\binom{n-2}{2}\geq n$, and then there exist $n-i_{1}$ distinct arcs $a_{(x_{i_{1}},n)}\in A_{H}(x_{i_{1}},n), a_{(x_{i_{1}+1},x_{i_{1}+2})}$
  $\in A_{H}(x_{i_{1}+1},$
  $x_{i_{1}+2}),$
  $\ldots, a_{(x_{n-1},x_{n-2})}\in A_{H}(x_{n-1},x_{n-2})$ and $a_{(x_{i_{1}+1},n)}\in
  A_{H}$
  $(x_{i_{1}+1},n)$.
 Thus,
$x_{1}$ $^{\underrightarrow{a_{1}}}x_{2}$ $^{\underleftarrow{a_{2}}}x_{3}\cdots x_{i_{1}}$$^{\underrightarrow{a_{(x_{i_{1}},n)}}}n$
$^{\underleftarrow{a_{(x_{i_{1}+1},n)}}}$
$x_{i_{1}+1}$
$^{\underrightarrow{a_{(x_{i_{1}+1},x_{i_{1}+2})}}}x_{i_{1}+2}\cdots x_{n-2}$
$^{\underleftarrow{a_{(x_{n-1},x_{n-2})}}}$
$x_{n-1}$
 is an antidirected hamiltonian path in $H$.
\par Likewise, if there exists an integer $i_{2}\in \{2,4,\ldots,n-1\}$ such that
$A_{H}(n ,x_{i_{2}})\not=\emptyset, A_{H}( x_{i_{2}+1},n)$
$=\emptyset,A_{H}(n,
 x_{i_{2}+2})=\emptyset\ldots, A_{H}( x_{n-2},n)=\emptyset,A_{H}(n,
 x_{n-1})=\emptyset$, then $H$ contains an antidirected hamiltonian path in $H$.
\par Assume now that for any integer $i\in \{1,2,\ldots,n-1\}$, if $i$ is odd, then $A_{H}(x_{i},n)=\emptyset$; if $i$ is even, then $A_{H}(n,x_{i})=\emptyset$.
Thus, $|A_{H}(x_{2},x_{1})|\geq |A_{H}(x_{2},n)\cap A_{H}(n,x_{1})|=n-3, \ldots,|A_{H}(x_{n-1},x_{n-2})|\geq |A_{H}(x_{n-1},n)\cap A_{H}(n,x_{n-2})|=n-3$. Similarly, there exist $n-1$ distinct arcs $a_{(x_{2},x_{1})}\in A_{H}(x_{2},x_{1}),\ldots,$
$ a_{(x_{n-1},x_{n-2})}\in A_{H}(x_{n-1},x_{n-2})$ and $a_{(n,x_{1})}\in A_{H}(n,x_{1})$.
 Thus,
$n$$^{\underrightarrow{a_{(n,x_{1})}}}x_{1}$ $^{\underleftarrow{a_{(x_{2},x_{1})}}}x_{2}$ $^{\underrightarrow{a_{(x_{2},x_{3})}}}x_{3}\cdots x_{n-2}$
$^{\underleftarrow{a_{(x_{n-1},x_{n-2})}}}x_{n-1}$
 is an antidirected hamiltonian path in $H$.\\
 \textbf{Subcase 2.3} $5\leq k\leq n-2$.
\par Assume first that there exists an integer $i_{1}\in \{1,3,\ldots,n-2\}$ such that
$A_{H}(x_{i_{1}},n)\not=\emptyset, A_{H}(n, x_{i_{1}+1})$
$=\emptyset,A_{H}(
 x_{i_{1}+2},n)=\emptyset,\ldots, A_{H}(
 x_{n-2},n)=\emptyset,A_{H}(n,
 x_{n-1})=\emptyset$, then $|A_{H}(x_{i_{1}+1},$
 $x_{i_{1}+2})|\geq |A_{H}(x_{i_{1}+1},n)\cap A_{H}(n,x_{i_{1}+2})|$
 $\geq n-1, \ldots,$
 $|A_{H}(x_{n-1},x_{n-2})|$
 $\geq |A_{H}(x_{n-1},n)\cap A_{H}(n,x_{n-2})|$
 $\geq n-1$ and $|A_{H}
  (x_{i_{1}+1},n)|> n$, and so there exist $n-i_{1}$ distinct arcs $a_{(x_{i_{1}},n)}\in A_{H}(x_{i_{1}},n), $
  $a_{(x_{i_{1}+1},x_{i_{1}+2})}$
  $\in A_{H}(x_{i_{1}+1},x_{i_{1}+2}),$
  $\ldots, a_{(x_{n-1},x_{n-2})}\in A_{H}(x_{n-1},x_{n-2})$ and $a_{(x_{i_{1}+1},n)}\in A_{H}$
  $(x_{i_{1}+1},n)$.
 Thus,
$x_{1}$ $^{\underrightarrow{a_{1}}}x_{2}$ $^{\underleftarrow{a_{2}}}x_{3}\cdots x_{i_{1}}$
$^{\underrightarrow{a_{(x_{i_{1}},n)}}}n$
$^{\underleftarrow{a_{(x_{i_{1}+1},n)}}}x_{i_{1}+1}$
$^{\underrightarrow{a_{(x_{i_{1}+1},x_{i_{1}+2})}}}x_{i_{1}+2}\cdots x_{n-2}$
$^{\underleftarrow{a_{(x_{n-1},x_{n-2})}}}x_{n-1}$
 is an antidirected hamiltonian path in $H$.
\par Likewise, if there exists an integer $i_{2}\in \{2,4,\ldots,n-1\}$ such that
$A_{H}(n ,x_{i_{2}})\not=\emptyset, A_{H}( x_{i_{2}+1},n)$
$=\emptyset,A_{H}(n,
 x_{i_{2}+2})=\emptyset\ldots, A_{H}( x_{n-2},n)=\emptyset,A_{H}(n,
 x_{n-1})=\emptyset$, then $H$ contains an antidirected hamiltonian path in $H$.
\par Assume now that for any integer $i\in \{1,2,\ldots,n-1\}$, if $i$ is odd, then $A_{H}(x_{i},n)=\emptyset$; if $i$ is even, then $A_{H}(n,x_{i})=\emptyset$.
Thus, $|A_{H}(x_{2},x_{1})|\geq |A_{H}(x_{2},n)\cap A_{H}(n,x_{1})|\geq n-1, \ldots,|A_{H}(x_{n-1},x_{n-2})|\geq |A_{H}(x_{n-1},n)\cap A_{H}(n,x_{n-2})|\geq n-1$ and $|A_{H}(n,x_{1})|> n$, and so there exist $n-1$ distinct arcs $a_{(x_{2},x_{1})}\in A_{H}(x_{2},x_{1}),\ldots,$
$ a_{(x_{n-1},x_{n-2})}\in A_{H}(x_{n-1},x_{n-2})$ and $a_{(n,x_{1})}\in A_{H}(n,x_{1})$.
 Thus,
$n$$^{\underrightarrow{a_{(n,x_{1})}}}x_{1}$ $^{\underleftarrow{a_{(x_{2},x_{1})}}}x_{2}$ $^{\underrightarrow{a_{(x_{2},x_{3})}}}x_{3}\cdots x_{n-2}$
$^{\underleftarrow{a_{(x_{n-1},x_{n-2})}}}x_{n-1}$
 is an antidirected hamiltonian path in $H$.
 \par Therefore, if $a_{n-2}\in A_{H}(x_{n-2},x_{n-1})$, then $H$ has an antidirected hamiltonian path.
Likewise, if $a_{n-2}\in A_{H}(x_{n-1},x_{n-2})$ or $H''$ contains an antidirected hamiltonian path such that two ending vertices are terminal vertices, then $H$ has an antidirected hamiltonian path. This completes the proof of the theorem.
\end{proof}

\begin{flushleft}
\textbf{Declarations}
\end{flushleft}
\par The authors declare that they have no known competing financial interests or personal relationships that could have
appeared to influence the work reported in this paper and no data was used for the research described in the paper.

\end{document}